\theoremstyle{plain}
\newtheorem{theorem}{{\bf Theorem}}
\newtheorem*{theorem*}{{\bf Theorem}}
\newtheorem{corollary}[theorem]{{\bf Corollary}}
\newtheorem*{corollary*}{{\bf Corollary}}
\newtheorem{lemma}[theorem]{{\bf Lemma}}
\newtheorem{question}[theorem]{{\bf Question}}
\newtheorem{conjecture}[theorem]{{\bf Conjecture}}
\theoremstyle{definition}
\theoremstyle{remark}
\newtheorem*{example}{{\it Example}}
\DeclareMathOperator{\im}{im}
\DeclareMathOperator{\kk}{k}
\DeclareMathOperator{\Wh}{Wh}
\DeclareMathOperator{\SK}{SK}
\DeclareMathOperator{\FF}{\mathbb F}
\DeclareMathOperator{\ab}{ab}
\DeclareMathOperator{\I}{I}
\DeclareMathOperator{\Log}{Log}
\DeclareMathOperator{\Hom}{Hom}
\DeclareMathOperator{\rad}{Rad}		
\DeclareMathOperator{\gr}{gr}		
\DeclareMathOperator{\B}{B}			
\DeclareMathOperator{\K}{K}			
\DeclareMathOperator{\GL}{GL}		
\DeclareMathOperator{\trf}{trf}		
\DeclareMathOperator{\incl}{incl}	
\DeclareMathOperator{\id}{id}		
\DeclareMathOperator{\Gal}{Gal}		
\DeclareMathOperator{\R}{R}			
\DeclareMathOperator{\Q}{Q}			
\DeclareMathOperator{\Aut}{Aut}		
\DeclareMathOperator{\coker}{coker}	
\DeclareMathOperator{\M}{M}	
\DeclareMathOperator{\HH}{H}	
\DeclareMathOperator{\stab}{Stab}	
\DeclareMathOperator{\f}{F}			
\newcommand{\ZZ}{\mathbb{Z}}
\newcommand{\QQ}{\mathbb{Q}}
\newcommand{\Fp}{\mathbb{F}_{p}}    
\newcommand{\Fq}{\mathbb{F}_{q}}	
\newcommand{\Fl}{\mathbb{F}_{l}}	
\newcommand{\F}{\mathbb{F}}			
\begin{document}
\baselineskip=14pt
\title[Units of group rings]{Units of group rings, the Bogomolov multiplier, and the fake degree conjecture}
\author[J. Garc\'ia-Rodr\' iguez]{Javier Garc\'ia-Rodr\' iguez }
\address{
Javier Garc\'ia-Rodr\' iguez \\
Departamento de Matem\'aticas, Universidad Aut\'onoma de Madrid and Instituto de Ciencias Matem\'aticas \\
Madrid \\
Spain}
\email{javier.garciarodriguez@uam.es}
\author[A. Jaikin-Zapirain]{Andrei Jaikin-Zapirain}
\address{
Andrei Jaikin-Zapirain \\
Departamento de Matem\'aticas, Universidad Aut\'onoma de Madrid and Instituto de Ciencias Matem\'aticas \\
Madrid \\
Spain}
\email{andrei.jaikin@uam.es}
\author[U. Jezernik]{Urban Jezernik}
\address{
Urban Jezernik \\
Institute of Mathematics, Physics, and Mechanics \\
Ljubljana \\
Slovenia}
\email{urban.jezernik@imfm.si}

\date{\today}
\begin{abstract}
Let $\pi$ be a finite $p$-group and $\FF_q$ a finite field with $q=p^n$
elements. Denote by $\I_{\FF_q}$  the augmentation ideal of the group ring
$\Fq[\pi]$. We have found a surprising relation between the abelianization of
$1+\I_{\Fq}$, the Bogomolov multiplier $\B_0(\pi)$ of $\pi$ and the number of
conjugacy classes $\kk(\pi)$ of $\pi$:
\[
\left | (1+\I_{\Fq})_{\ab} \right |=q^{\kk(\pi)-1}|\!\B_0(\pi)|.
\]
In particular, if $\pi$ is a finite $p$-group with a non-trivial Bogomolov
multiplier, then $1+\I_{\Fq}$ is a counterexample to the fake degree conjecture
proposed by M. Isaacs.
\end{abstract}
\maketitle
\section{Introduction}

\noindent Let $J$ be a finite dimensional nilpotent algebra over a finite  field $\f$.
Then the set $G=1+J$ is a finite group. The groups constructed in this way are
called {\em algebra groups}.  The group $G$ acts by conjugation on $J$.  This
induces an action of $G$ on the dual space $J^*=\Hom_{\f}(J,\f)$.  It has been noted
that there exists a relation between the characters of $G$ and the orbits of
$J^*$. For example, if $J^p=0$, there exists an explicit expression that gives a
bijective correspondence between the characters of $G$ and the orbits of $J^*$
(\cite{Sa01}). In particular, when $J^p=0$, we  obtain that the character degrees
of $G$, counting  multiplicities, are the square roots of the sizes of the
$G$-orbits in $J^*$. It was conjectured by M. Isaacs that the same holds also in
the general case:

\begin{conjecture}[Fake degree conjecture]
\label{fd}
In every algebra group $G=1+J$ the character degrees coincide, counting
multiplicities, with the square roots of the cardinals of the $G$-orbits in
$J^*$.
\end{conjecture}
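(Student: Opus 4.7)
The plan is to approach the conjecture via the Kirillov orbit method, which establishes it in the case $J^p=0$ (Sangroniz). The target is a canonical bijection between the $G$-orbits on $J^*$ and the irreducible characters of $G=1+J$, sending an orbit $\mathcal{O}$ to a character $\chi_{\mathcal{O}}$ of degree $\sqrt{|\mathcal{O}|}$.

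For each $\lambda \in J^*$ I would construct a candidate character as follows. Consider the alternating form $B_\lambda(x,y)=\lambda(xy-yx)$ on $J$, with radical $R_\lambda$; the stabiliser of $\lambda$ under the $G$-action on $J^*$ should coincide with $1+R_\lambda$ up to a suitable correction. Choose an $\f$-subalgebra $L \subseteq J$ containing $R_\lambda$, of half codimension in $J/R_\lambda$, and isotropic for $B_\lambda$. Define a function on $H=1+L$ by
\[
\theta_\lambda(1+x) \;=\; \psi\bigl(\lambda(\Log(1+x))\bigr),
\]
where $\psi$ is a fixed additive character of $\f$ and $\Log$ is the truncated logarithm, a polynomial map on $1+J$ since $J$ is nilpotent. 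The candidate character is $\chi_\lambda = \operatorname{Ind}_H^G \theta_\lambda$, of degree $[G:H]=\sqrt{|G\cdot\lambda|}$ once $H$ corresponds to a maximal isotropic.

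The plan then reduces to four assertions: (i) $\theta_\lambda$ is multiplicative on $H$; (ii) $\chi_\lambda$ is irreducible; (iii) the assignment $\lambda \mapsto \chi_\lambda$ factors through $G$-orbits; (iv) distinct orbits give distinct characters. Items (ii)--(iv) should fall out of Mackey theory combined with the global count $|G|=\sum_{\mathcal{O}}|\mathcal{O}|=\sum_\chi \chi(1)^2$, which forces the family $\{\chi_\lambda\}$ to exhaust $\operatorname{Irr}(G)$ as soon as the degree identity $\chi_\lambda(1)^2=|G\cdot\lambda|$ is secured.

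The principal obstacle will be (i). When $J^p=0$, $\Log$ is a group isomorphism from $G$ to $(J,\circ_{\mathrm{BCH}})$, and multiplicativity of $\theta_\lambda$ reduces to $\lambda([L,L])=0$, which holds by isotropy. When $J^p\neq 0$ the Baker--Campbell--Hausdorff series has denominators divisible by $p$ and $\Log$ is no longer a homomorphism. My strategy is to strengthen the notion of polarisation: require $\lambda$ to vanish on the full associative defect
\[
D(x,y) \;=\; \Log\bigl((1+x)(1+y)\bigr) - \Log(1+x) - \Log(1+y)
\]
for all $x,y\in L$. Such an \emph{associative polarisation} makes $\theta_\lambda$ multiplicative by construction, so the task reduces to proving that associative polarisations always exist. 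I would attempt this by induction on $\dim J$, peeling off one-dimensional central ideals $Z \subseteq J$ and lifting polarisations through Clifford theory from $1+(J/Z)$ to $G$. The hope is that the induction closes because each extension is abelian-by-abelian, so any obstruction to lifting is a $2$-cocycle valued in a one-dimensional space, which one should be able to kill by replacing $\lambda$ by a well-chosen $G$-conjugate before constructing $L$.
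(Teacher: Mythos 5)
This statement is a \emph{conjecture}, and the paper does not prove it --- it \textbf{disproves} it. The entire point of the paper (Theorem \ref{th:sizeequality} together with Corollary \ref{c:examples}) is that the fake degree conjecture is false in every characteristic, so no proof strategy, including yours, can succeed. The concrete obstruction already appears at the level of degree-one characters, before any of the subtleties of polarisations and the BCH series that you correctly identify as the technical difficulty. By Lemma \ref{l:abelianization}, the number of $G$-orbits of size $1$ in $J^*$ (equivalently, the number of times the ``fake degree'' $1$ occurs) equals $|J/[J,J]_L|$, while the number of actual linear characters of $G$ equals $|(1+J)_{\ab}|$. For the conjecture to hold these two numbers must agree. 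But for $J=\I_{\FF_q}$ the augmentation ideal of $\FF_q[\pi]$ with $\pi$ a finite $p$-group, Lemma \ref{l:liealgebra} gives $|J/[J,J]_L|=q^{\kk(\pi)-1}$, whereas Theorem \ref{th:sizeequality} gives $|(1+J)_{\ab}|=q^{\kk(\pi)-1}|\!\B_0(\pi)|$. Whenever the Bogomolov multiplier $\B_0(\pi)$ is non-trivial (such groups exist of order $p^5$ for odd $p$ and $2^6$ for $p=2$), the group $G=1+J$ has strictly more linear characters than $J^*$ has fixed points, so the two multisets cannot coincide.

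In terms of your plan, the step that must fail is the existence of ``associative polarisations'': for the extra linear characters of $G$ that do not come from fixed points of $J^*$, there is no $\lambda$ with $\lambda([J,J]_L)=0$ producing them, and conversely the global count $|G|=\sum_{\mathcal O}|\mathcal O|=\sum_\chi\chi(1)^2$ cannot force your family to exhaust $\operatorname{Irr}(G)$ with the prescribed degrees, because the degree-$1$ multiplicities already disagree. The correct reading of the paper is that the Kirillov-type correspondence genuinely breaks down once $J^p\neq 0$, and the failure is measured by a $K$-theoretic invariant ($\SK_1(\R_q[\pi])\cong\B_0(\pi)$ via Oliver's work) rather than by any fixable defect of the logarithm.
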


Note that an immediate corollary of this conjecture (see Lemma
\ref{l:abelianization}) is that the orders of $[J,J]_L$ and $[1+J,1+J ]_G$ have
to be equal (in this work we write $[a ,b ]=a^{-1}b^{-1}ab$ for group
commutators and $[a ,b]_L=ab-ba$ for Lie brackets). Thus in order to understand
Conjecture \ref{fd}, one should first answer the following question.

\begin{question}
\label{q:abel}
Is it true that the size of the abelianization of $1+J$ coincides with the index
of $[J,J]_L$ in $J$?
\end{question}

In \cite{Ja1} an example that provides a negative answer to Question \ref{q:abel} in
characteristic $2$ was constructed. However, in questions related to character
correspondences for finite $p$-groups the prime $p=2$  always plays a special
role (see, for example, \cite{Ja2}), and so one might hope that Conjecture
\ref{fd} still holds in odd characteristic.

This was our motivation for looking at the following family of
examples. Let $\pi$ be a finite $p$-group. Given a ring $R$ we will set $\I_{R}$
to be the augmentation ideal of the group ring $R[\pi]$. If we take $R=\Fq$,
then $\I_{\Fq}$ is a nilpotent algebra and  $1 + \I_{\FF_q}$ is the group of
normalized units of the modular group ring $\Fq[\pi]$. It is not difficult to
see that the index of $[\I_{\FF_q}, \I_{\FF_q}]_L$ in $\I_{\FF_q}$ is equal to
$q^{\kk(\pi)-1}$, where $\kk(\pi)$ is the number of conjugacy classes of $\pi$
(see Lemma \ref{l:liealgebra}). Our main result describes the size of the
abelianization $(1+\I_{\FF_q})_{\ab}$ of $1 + \I_{\FF_q}$.

\begin{theorem}\label{th:sizeequality}
Let $\pi$ be a finite $p$-group. Then
$|(1 + \I_{\FF_q})_{\ab}| = q^{\kk(\pi) - 1} |\!\B_0(\pi)|$.
\end{theorem}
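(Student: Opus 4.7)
By Lemma~\ref{l:liealgebra} we have $|\I_{\Fq}/[\I_{\Fq}, \I_{\Fq}]_L| = q^{\kk(\pi) - 1}$, so the theorem reduces to showing that the ratio $|G_{\ab}|/q^{\kk(\pi)-1}$ equals $|\B_0(\pi)|$, where $G = 1 + \I_{\Fq}$. My plan has two main components: a filtration argument that relates $G_{\ab}$ to a graded Lie-theoretic quotient, and a matching of the residual discrepancy with the Bogomolov multiplier.

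First, I would use the Jennings-type filtration $L_i = 1 + \I_{\Fq}^i$ to establish an isomorphism of restricted graded Lie algebras $\gr G \cong \gr \I_{\Fq}$, with the group commutator on $G$ matching the Lie bracket on $\I_{\Fq}$ modulo higher filtration terms via the identity $[1+x, 1+y] = 1 + (1+x)^{-1}(1+y)^{-1}[x,y]_L$. This yields an approximate description of $\gr G_{\ab}$ as a quotient of $\gr\I_{\Fq}/[\gr \I_{\Fq}, \gr \I_{\Fq}]_L$. The order of $G_{\ab}$ is therefore tightly controlled by the Lie algebra, with a possible "discrepancy" arising from the failure of $\gr [G, G]$ to coincide with $[\gr \I_{\Fq}, \gr \I_{\Fq}]_L$: this failure comes from products of group commutators in $G$ whose leading Lie brackets cancel at the graded level, forcing them into strictly higher filtration than a single bracket would occupy.

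Second, I would identify this discrepancy factor with $|\B_0(\pi)|$. The key idea is that commutators in $G$ involving commuting pairs $g, h \in \pi$ (with $[g, h] = 1$ in $\pi$) produce "extra" relations at the graded level — these correspond precisely to the toral subgroup $\M_0(\pi) \le \HH_2(\pi, \ZZ)$, generated by images of $\HH_2(A, \ZZ) \to \HH_2(\pi, \ZZ)$ for abelian subgroups $A \le \pi$. Meanwhile, relations forced by non-toral classes in $\HH_2(\pi, \ZZ)$ cannot be identified away in $G_{\ab}$, and contribute the $|\B_0(\pi)|$ factor. Making this matching precise would involve the Hopf formula $\HH_2(\pi, \ZZ) = (R \cap [F,F])/[F, R]$ for a presentation $\pi = F/R$, together with a Fox-calculus analysis that realises relations of $\pi$ as explicit relations in $G$ inherited from the algebra structure of $\FF_q[F]$.

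The main obstacle is exactly this bridging between the group-commutator structure of $G$ and the cohomological description of $\B_0(\pi)$. The Bogomolov multiplier is intrinsically about $\pi$, whereas $G_{\ab}$ is the abelianization of a much larger object, and the miracle is that the extra degrees of freedom beyond the Lie quotient are governed entirely by $\pi$'s Schur multiplier modulo toral classes. I expect the argument to require a delicate interplay of the filtration, the five-term homology exact sequence applied to $\pi \hookrightarrow G$, and a careful matching of toral classes with the specific commutator relations among $\pi$-elements forced to hold inside $G$ by the algebra structure of $\FF_q[\pi]$.
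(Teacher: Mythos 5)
This is a plan rather than a proof, and the central step is missing. Your first component (the filtration $1+\I_{\FF_q}^i$ and the isomorphism $\gr G\cong\gr\I_{\FF_q}$ of restricted graded Lie algebras) only yields inequalities: the induced filtration $\gr(G')$ contains $[\gr G,\gr G]$ but may be strictly larger, and neither of these coincides in general with the image of the ungraded bracket $[\I_{\FF_q},\I_{\FF_q}]_L$, whose codimension is what Lemma~\ref{l:liealgebra} computes. So the "discrepancy" you introduce is not pinned down even in sign by this argument; note that the theorem forces $|G'|=q^{|\pi|-\kk(\pi)}/|\!\B_0(\pi)|$, i.e.\ the group commutator subgroup is \emph{smaller} than the Lie commutator subalgebra, which is not what a naive "extra relations at the graded level" heuristic suggests. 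Your second component — matching the discrepancy with $\B_0(\pi)$ via the Hopf formula, toral classes, and the five-term sequence for $\pi\hookrightarrow G$ — is stated as a goal, not carried out; the assertion that non-toral classes of $\HH_2(\pi,\ZZ)$ "cannot be identified away in $G_{\ab}$" is precisely the content of the theorem and is given no argument.

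For comparison, the paper does not work mod $p$ at all at the crucial step. It lifts to the $p$-adic group ring $\R_q[\pi]$, uses $\K_1(\FF_q[\pi])\cong\FF_q^*\times(1+\I_{\FF_q})_{\ab}$ together with the relative $\K$-theory exact sequence for the ideal $(p)$, and imports two substantial results of Oliver: the isomorphism $\SK_1(\R_q[\pi])\cong\B_0(\pi)$ and the logarithm-based identification of $\Wh'(\R_q[\pi])$ inside $\bar\I_{\R_q}$. The factor $q^{\kk(\pi)-1}$ then appears as the order of the cokernel $\M_q$ of $1-\frac1p\Phi$, not directly from Lemma~\ref{l:liealgebra}. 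The Bogomolov multiplier enters as the kernel of $\K_1(\R_q[\pi])\to\K_1(\Q_q[\pi])$ — a characteristic-zero phenomenon that a purely mod-$p$ graded analysis has no evident way to detect. Without either invoking Oliver's theorem or reproving it, your plan cannot close; as written, it identifies the difficulty but does not resolve it.
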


The group $\B_0(\pi)$ that appears in the theorem is the {\em Bogomolov
multiplier} of $\pi$. It is defined as the subgroup of the Schur multiplier
$\HH^2(\pi,\QQ/\ZZ)$ of $\pi$ consisting of the cohomology classes vanishing
after restriction to all abelian subgroups of $\pi$.  The Bogomolov multiplier
plays an important role in birational geometry of quotient spaces $V/\pi$ as it
was shown by Bogomolov in \cite{Bog88}. In a dual manner, one may view the group
$\B_0(\pi)$ as an appropriate quotient of the homological Schur multiplier
$\HH_2(\pi,\ZZ)$, see \cite{Mor12}. We were surprised to discover that, in this
form, the Bogomolov multiplier had appeared in the literature much earlier in a
paper of W. D. Neumann \cite{Ne}, as well as in the paper of B. Oliver
\cite{Oli80} that plays an essential role in our proofs. The latter paper
contains various results about Bogomolov multipliers that were only subsequently
proved in the cohomological framework.

There are plenty of finite $p$-groups with non-trivial Bogomolov multipliers
(see, for example, \cite{Kan14}). Thus we obtain a negative solution to the fake
degree conjecture for all primes.

\begin{corollary}
\label{c:examples}
For every prime $p$ there exists a finite dimensional nilpotent $\F_p$-algebra
$J$ such that the size of the abelianization of $1+J$ is greater than the index
of $[J,J]_L$ in $J$. In particular, the fake degree conjecture is not valid in
any characteristic.
\end{corollary}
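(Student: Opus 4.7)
The plan is to deduce the corollary as an immediate consequence of Theorem \ref{th:sizeequality}, together with the known existence in the literature of finite $p$-groups with non-trivial Bogomolov multiplier for every prime $p$ (e.g.\ the examples collected in \cite{Kan14}).

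Fix a prime $p$, and select a finite $p$-group $\pi$ with $\B_0(\pi) \neq 1$. Set $J = \I_{\F_p}$, the augmentation ideal of $\F_p[\pi]$; since $\pi$ is a $p$-group, $J$ is a finite dimensional nilpotent $\F_p$-algebra, so $1+J$ is an algebra group. Lemma \ref{l:liealgebra} (stated in the discussion above) gives
\[
[J : [J,J]_L] = p^{\kk(\pi)-1},
\]
while Theorem \ref{th:sizeequality}, applied with $q = p$, yields
\[
|(1+J)_{\ab}| = p^{\kk(\pi)-1}\,|\B_0(\pi)|.
\]
Since $|\B_0(\pi)| > 1$, the first assertion of the corollary follows at once.

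For the second assertion I would invoke Lemma \ref{l:abelianization}, which shows that whenever the fake degree conjecture holds for an algebra group $1+J$, the orders of $[J,J]_L$ and of $[1+J, 1+J]_G$ must agree, so $|(1+J)_{\ab}| = [J : [J,J]_L]$. The strict inequality above therefore contradicts Conjecture \ref{fd} for our particular $J$; as the prime $p$ was arbitrary, the conjecture fails in every characteristic.

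There is no genuine obstacle here: once Theorem \ref{th:sizeequality} is available, the argument is purely a matter of assembly. The one external input needed is the existence of finite $p$-groups with non-trivial $\B_0$ at every prime, which by now is a standard fact in the theory of Bogomolov multipliers and is why it is reasonable to take this route rather than exhibiting explicit algebras $J$ by hand.
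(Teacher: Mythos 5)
Your proposal is correct and follows essentially the same route as the paper: combine Theorem \ref{th:sizeequality} (with $q=p$) and Lemma \ref{l:liealgebra} for a $p$-group $\pi$ with non-trivial Bogomolov multiplier, then invoke Lemma \ref{l:abelianization} to contradict the fake degree conjecture. The only cosmetic difference is the reference for existence of such groups (the paper cites \cite{HKK13,CHKK10} for groups of order $p^5$, resp.\ $2^6$, at this point), which does not affect the argument.
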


Our next result provides a conceptual explanation for the equality in Theorem
\ref{th:sizeequality}. Let $\FF$ be an algebraic closure of $\FF_p$. One can
think of $\mathbf G=1+\I_{\FF}$ as an algebraic group defined over $\FF_p$.
It is clear that $\mathbf G$ is a unipotent group. A direct calculation shows
that  the   Lie algebra $\mathfrak L(\mathbf G)$ of $\mathbf G$ is isomorphic to
$\I_{\FF}$.  We write $\mathbf G(\Fq)$ for the $\Fq$-points of $\mathbf G$.
The derived subgroup $\mathbf G^\prime$ of $\mathbf G$ is also a unipotent
algebraic group defined over $\F_p$ (see \cite[Corollary I.2.3]{Bor91}), and so by
\cite[Remark A.3]{KMT74}, $|\mathbf G^\prime(\F_q)|=q^{\dim \mathbf G^\prime}$.
Note that in general we have only an inclusion $$(1+\I_{\FF_q})^\prime=(\mathbf
G(\Fq))^\prime \subseteq \mathbf G^\prime(\Fq),$$ but not the equality.

\begin{theorem} 
\label{t:algebraicgroup} Let $\pi$ be a finite $p$-group and $\mathbf G=1+\I_{\F}$. 
\begin{enumerate}
\item
We have
\[
\dim \mathbf G^\prime=\dim_{\FF}[\mathfrak L(\mathbf G),\mathfrak L(\mathbf G)  ]_L=|\pi|-\kk(\pi).
\]
In particular,
\[
|\mathbf G(\FF_q):\mathbf G^\prime(\FF_q)|=q^{\kk(\pi)-1}.
\]

\item  For every $q=p^{n}$, we have
\[
\mathbf G'(\Fq)/\mathbf G(\Fq)'\cong \B_{0}(\pi).
\]
\end{enumerate}

\end{theorem}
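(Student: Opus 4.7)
For Part (1), the second equality is exactly (the base change of) Lemma \ref{l:liealgebra}. The first equality I would obtain by passing to the associated graded of the $\I$-adic filtration $\mathbf G \supset 1 + \I_\FF^2 \supset 1 + \I_\FF^3 \supset \cdots$. Under the identification $(1 + \I_\FF^k)/(1 + \I_\FF^{k+1}) \cong \I_\FF^k/\I_\FF^{k+1}$ via $1+a \mapsto \bar a$, the group commutator $[1+a, 1+b]$ agrees modulo higher filtration with $1 + [a,b]_L$; thus $\gr \mathbf G'$ and $[\gr \I_\FF, \gr \I_\FF]_L$ coincide inside $\gr \I_\FF$, yielding
$$\dim \mathbf G' = \dim \gr \mathbf G' = \dim [\gr \I_\FF, \gr \I_\FF]_L = \dim [\I_\FF, \I_\FF]_L.$$
The index formula $|\mathbf G(\Fq) : \mathbf G'(\Fq)| = q^{\kk(\pi)-1}$ then follows because $\mathbf G$ and $\mathbf G'$ are connected unipotent over $\F_p$, so their $\Fq$-points have cardinalities $q^{\dim \mathbf G}$ and $q^{\dim \mathbf G'}$ respectively by \cite[Remark A.3]{KMT74}.

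For Part (2), the plan is to combine Lang-Steinberg with Theorem \ref{th:sizeequality}. Connectedness of $\mathbf G'$ gives $\HH^1(\Gal(\FF/\Fq), \mathbf G'(\FF)) = 0$, so the exact sequence $1 \to \mathbf G' \to \mathbf G \to \mathbf G/\mathbf G' \to 1$ induces
$$1 \to \mathbf G'(\Fq) \to \mathbf G(\Fq) \to (\mathbf G/\mathbf G')(\Fq) \to 1.$$
Consequently $\mathbf G'(\Fq)/\mathbf G(\Fq)'$ is naturally the kernel of the surjection $\mathbf G(\Fq)_\ab \twoheadrightarrow (\mathbf G/\mathbf G')(\Fq)$. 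Part (1) gives $|(\mathbf G/\mathbf G')(\Fq)| = q^{\kk(\pi)-1}$, and Theorem \ref{th:sizeequality} gives $|\mathbf G(\Fq)_\ab| = q^{\kk(\pi)-1}|\B_0(\pi)|$, so the kernel has order $|\B_0(\pi)|$. To upgrade this numerical coincidence to an isomorphism, I would revisit the proof of Theorem \ref{th:sizeequality}: the factor $|\B_0(\pi)|$ there must arise from an explicit surjection $\mathbf G(\Fq)_\ab \twoheadrightarrow \B_0(\pi)$, and one would verify that its kernel coincides with the image of $\mathbf G(\Fq)$ inside $(\mathbf G/\mathbf G')(\Fq)$.

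The main obstacle lies in this last identification. On one side $\B_0(\pi)$ is defined intrinsically from $\pi$ (cohomologically, or via Oliver's commutator relations on abelian subgroups), while on the other $\mathbf G'(\Fq)/\mathbf G(\Fq)'$ is defined in a $q$-dependent manner. The canonical homomorphism relating them must therefore descend from the $\F_p$-structure of $\mathbf G$ and involve only the conjugation combinatorics of $\pi$, so that the $q$ dependence drops out. The Oliver-type formalism from \cite{Oli80}, which is the same machinery underlying Theorem \ref{th:sizeequality}, should provide the correct framework to carry out this comparison and complete the proof.
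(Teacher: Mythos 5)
Your proposal diverges from the paper's proof in both parts, and in each case the divergence hides a genuine gap.

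In Part (1), the step ``$\gr \mathbf G'$ and $[\gr \I_{\F},\gr\I_{\F}]_L$ coincide inside $\gr\I_{\F}$'' is exactly the point that cannot be taken for granted. The commutator identity $[1+a,1+b]\equiv 1+[a,b]_L$ modulo higher filtration only gives the inclusion $\gr\mathbf G'\supseteq[\gr\I_{\F},\gr\I_{\F}]_L$, i.e.\ the lower bound $\dim\mathbf G'\geq\dim[\mathfrak L(\mathbf G),\mathfrak L(\mathbf G)]_L$ (which is the standard inequality of \cite[Corollary 10.5]{Hum75} that the paper also invokes). The reverse inclusion would require that the leading term of an arbitrary product of commutators again lie in $[\gr\I,\gr\I]_L$, and cancellation of leading terms destroys this; indeed, Theorem \ref{th:sizeequality} itself shows that over $\FF_q$ the analogous equality of cardinalities $|(1+\I_{\FF_q})'|=|[\I_{\FF_q},\I_{\FF_q}]_L|$ \emph{fails} by a factor of $|\!\B_0(\pi)|$ whenever $\B_0(\pi)\neq 0$, so ``group commutators and Lie brackets match up after passing to the graded object'' is precisely the kind of statement this paper is about refuting. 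The hard direction, $\dim\mathbf G'\leq|\pi|-\kk(\pi)$, is obtained in the paper from a lower bound $|\mathbf G(\Fq):\mathbf G'(\Fq)|\geq q^{\kk(\pi)-1}$, which in turn comes from showing that the induced map $\iota\colon\M_q\to\M_l$ is injective (so that the kernel of $\mathbf G(\Fq)_{\ab}\to\mathbf G(\Fl)_{\ab}$ is trapped inside $\SK_1(\Fq[\pi])$). Your argument does not supply this upper bound.

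In Part (2), your Lang--Steinberg counting argument is correct as far as it goes (granting Part (1)): it yields $|\mathbf G'(\Fq)/\mathbf G(\Fq)'|=|\!\B_0(\pi)|$. But as you acknowledge, this is only a numerical coincidence, and the identification of the kernel of $\mathbf G(\Fq)_{\ab}\twoheadrightarrow(\mathbf G/\mathbf G')(\Fq)$ with the $\B_0(\pi)$-factor of Theorem \ref{th:exactseq} is left entirely open; that identification is the actual content of the statement. The paper closes this gap with two concrete inputs you do not have: first, the injectivity of $\iota\colon\M_q\to\M_l$ forces $\mathbf G'(\Fq)/\mathbf G(\Fq)'=\ker f\subseteq\SK_1(\Fq[\pi])\cong\B_0(\pi)$ for $l$ large; second, the transfer map $\trf\colon\SK_1(\Fl[\pi])\to\SK_1(\Fq[\pi])$ is an isomorphism by \cite[Proposition 21]{Oli80} and satisfies $\trf\circ\incl=(\,\cdot\,)^m$, so $\incl$ kills $\SK_1(\Fq[\pi])$ exactly when $\exp\B_0(\pi)$ divides $m=|\Fl:\Fq|$, giving equality $\ker f=\SK_1(\Fq[\pi])$ for suitable $l$ (and, as a bonus, Theorem \ref{exponent}). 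Without some substitute for these two steps your proof of Part (2) is incomplete.
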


Our hope is that the second statement of the theorem would help better
understand the structure of the Bogomolov multiplier. As an example of this
reasoning, recall that a classical problem about the Schur multiplier asks what
is the relation between the exponent of a finite group and of its Schur
multiplier (\cite{Sch04}). Standard arguments reduce this question to the case
of $p$-groups.  It is known that the exponent of the Schur multiplier is bounded
by some function that depends only on the exponent of the group (\cite{Mor07}),
but this bound  is obtained from  the bounds that appear in the solution of the
Restricted Burnside Problem and so it is probably very far from being optimal.
Applying to the homological description of the Bogomolov multiplier,  it is not
difficult to see that the exponent of the Schur multiplier is at most the
product of the exponent of the group by the exponent of the Bogomolov
multiplier. Thus, we hope that the following theorem would help obtain a
better bound on the exponent of the Schur multiplier.

 \begin{theorem}\label{exponent}
Let $\pi$ be a finite $p$-group and $\mathbf G=1+\I_{\F}$. For every $q=p^{n}$, we have
\[
\exp \B_0(\pi) = \min \{ m \mid \mathbf G^\prime (\FF_q) \subseteq \mathbf G(\FF_{ q^{m} })^\prime \}.
\]
 \end{theorem}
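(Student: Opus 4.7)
The plan is to identify, via Theorem \ref{t:algebraicgroup}(2), the inclusion-induced map
\[
\phi_m \colon \mathbf G^\prime(\F_q)/\mathbf G(\F_q)^\prime \longrightarrow \mathbf G^\prime(\F_{q^m})/\mathbf G(\F_{q^m})^\prime
\]
with an endomorphism $\Phi_m$ of $\B_0(\pi)$, and then to show $\Phi_m$ is multiplication by $m$. The first reduction is transparent: the condition $\mathbf G^\prime(\F_q) \subseteq \mathbf G(\F_{q^m})^\prime$ is equivalent to the composition $\mathbf G^\prime(\F_q) \hookrightarrow \mathbf G^\prime(\F_{q^m}) \twoheadrightarrow \mathbf G^\prime(\F_{q^m})/\mathbf G(\F_{q^m})^\prime$ being trivial; since this composition kills $\mathbf G(\F_q)^\prime$, it factors through $\phi_m$, and so $\mathbf G^\prime(\F_q) \subseteq \mathbf G(\F_{q^m})^\prime$ iff $\Phi_m = 0$.

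The key claim that drives everything is $\Phi_m = [m]$, multiplication by $m$ on $\B_0(\pi)$. Granted this claim, $\Phi_m = 0$ iff $m \cdot \B_0(\pi) = 0$ iff $\exp \B_0(\pi)$ divides $m$; hence the minimum such $m$ is exactly $\exp \B_0(\pi)$, as asserted.

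The main obstacle is the proof of the key claim, which requires tracing the dependence on the base field $\F_{q^n}$ of the isomorphism from Theorem \ref{t:algebraicgroup}(2). The natural tool is the norm map
\[
N \colon \mathbf G(\F_{q^m})_{\ab} \longrightarrow \mathbf G(\F_{q^m})_{\ab}, \qquad N(g) = \prod_{j=0}^{m-1} F^j(g),
\]
where $F$ is the $q$-Frobenius. This $N$ is a well-defined homomorphism because the target is abelian, its image lies in the $F$-fixed subgroup, and for $g$ already in $\mathbf G(\F_q)$ one has $N(g) = g^m$. I would apply $N$ to the short exact sequence
\[
0 \longrightarrow \B_0(\pi) \longrightarrow \mathbf G(\F_{q^m})_{\ab} \longrightarrow \mathbf G^{\ab}(\F_{q^m}) \longrightarrow 0
\]
furnished by Theorem \ref{t:algebraicgroup}(2), and combine this with the naturality in $q$ of the Bogomolov identification to extract the multiplication-by-$m$ action on the $\B_0(\pi)$-factor. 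The delicate step will be verifying the compatibility of the isomorphism of Theorem \ref{t:algebraicgroup}(2) with $F$ and with $N$, which should follow from the specific construction used in the proof of Theorem \ref{t:algebraicgroup}(2), modeled on Oliver's methods.
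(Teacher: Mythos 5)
Your reduction of the theorem to the key claim is sound, and the key claim itself is essentially what the paper establishes (at least at the level that matters, namely $\ker\Phi_m=\{x\in\B_0(\pi)\mid mx=0\}$). The gap is in your proposed proof of that claim. The Galois norm $N=\prod_{j=0}^{m-1}F^j$ on $\mathbf G(\F_{q^m})_{\ab}$ only yields the relation $N\circ\phi_m=\phi_m\circ[m]$, because $N$ acts as $g\mapsto g^m$ on classes of $F$-fixed elements; this identity is automatic and carries no information about $\ker\phi_m$ --- it is consistent with $\phi_m$ being injective, zero, or anything in between. In particular, nothing in your plan rules out the scenario that the identification of Theorem \ref{t:algebraicgroup}(2) is natural for the inclusion $\F_q\subseteq\F_{q^m}$, i.e.\ $\Phi_m=\id$, which would contradict the theorem whenever $\B_0(\pi)\neq 1$. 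The ``naturality of the Bogomolov identification'' you defer to the delicate step is precisely the whole content, and it turns out to be naturality with respect to the transfer, not the inclusion.

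Concretely, the paper's argument runs through the opposite composite. One first realizes $\mathbf G'(\F_q)/\mathbf G(\F_q)'$ as $\SK_1(\F_q[\pi])=\mu(\SK_1(\R_q[\pi]))$ inside $\mathbf G(\F_q)_{\ab}$: the ladder of exact sequences built from \eqref{eq:exfinal} together with the injectivity of $\M_q\to\M_l$ forces $\ker f\subseteq\SK_1(\F_q[\pi])$ for every extension, and a limiting argument over large $l$ gives equality with $\mathbf G'(\F_q)/\mathbf G(\F_q)'$. Then one uses the $K$-theoretic transfer $\trf\colon\K_1(\F_l[\pi])\to\K_1(\F_q[\pi])$ coming from $\F_l[\pi]\cong\F_q[\pi]^{m}$: the identity $\trf\circ\incl=[m]$ holds on $\K_1(\F_q[\pi])$, and --- this is the essential, non-formal input, \cite[Proposition 21]{Oli80} --- $\trf$ restricts to an \emph{isomorphism} $\SK_1(\F_l[\pi])\to\SK_1(\F_q[\pi])$. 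Only with that injectivity can one conclude $\ker(\incl|_{\SK_1})=\ker([m])$ and hence that $\incl$ kills $\SK_1(\F_q[\pi])$ exactly when $\exp\B_0(\pi)$ divides $m$. Your $N$ is the composite $\incl\circ\trf$, which is the wrong way around: without the injectivity of the transfer on $\SK_1$ (or some substitute for it), the argument does not close, and the proposal supplies no such ingredient.
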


\noindent {\em Acknowledgments:} Javier Garc\'ia-Rodr\'iguez and Andrei Jaikin-Zapirain partially supported by the   grant MTM 2011-28229-C02-01 of the Spanish MEyC and by the   ICMAT Severo Ochoa project SEV-2011-0087. The first author also supported by the FPI grant BES-2012-051797 of the Spanish MINECO. Urban Jezernik supported by the Slovenian Research Agency and in part by the Slovene Human Resources Development and Scholarship Fund.

This research was in part carried out while the third author was
visiting the Universidad Aut\'onoma de Madrid and Instituto de Ciencias Matem\'aticas.
He would like to thank everyone involved for their fine hospitality.

\section{Proofs of the results}
\subsection{Proof of Theorem \ref{th:sizeequality}} 
Let $\zeta_n$ denote a primitive $n$-root of unity. If $p$ is a prime and $q $ is a power of $p$, let
$\R_q=\ZZ_p[\zeta_{q-1}]$ be a finite extension of the $p$-adic integers
$\ZZ_p$. Note that   $\R_q/p\R_q \cong \FF_q$.
Fix a $\ZZ_p$-basis $ B_{q}=\{
\lambda_j \mid 1 \leq j \leq n \}$ of $\R_q$ and let $\varphi$ be a generator of
$\Aut(\R_q|\ZZ_p) \cong \Gal(\FF_q|\FF_p)$ such that $\varphi(\lambda) \cong
\lambda^p \pmod{p}$.
Let us define
\[
\bar\I_{\R_q} = \I_{\R_q}/\langle x - x^g \mid x \in \I_{\R_q}, \, g \in \pi
\rangle.
\] 
Set $\mathcal C$ to be a set of nontrivial conjugacy class representatives of
$\pi$. Then $\bar\I_{\R_q}$ can be regarded as a free $\ZZ_p$-module with basis
$\{ \lambda \overline{(1 - r)} \mid \lambda\in B_{q}, \, r \in \mathcal C \}$.
Finally define the abelian group $\M_q$ to be
\[
\M_q=\bar\I_{\R_q}/\langle p \lambda \overline{(1 - r)} - \varphi(\lambda)\overline{(1 - r^p)} \mid \lambda\in B_{q},\ r \in \mathcal C\rangle.
\]
The proof of Theorem \ref{th:sizeequality} rests on the following structural
description of the group $(1 + \I_{\FF_q})_{\ab}$.

\begin{theorem}
\label{th:exactseq}
Let $\pi$ be a finite $p$-group. There is an exact sequence
\begin{equation*}
\label{eq:extheorem}
\xymatrix{ 
1 \ar[r] & \B_0(\pi) \times \pi_{\ab} \ar[r] & (1 + \I_{\FF_q})_{\ab} \ar[r] & \displaystyle \M_q \ar[r] & \pi_{\ab} \ar[r] & 1.
}
\end{equation*}
\end{theorem}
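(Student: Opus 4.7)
The plan is to realise the four-term sequence as an explicit sequence built from the $p$-adic logarithm on lifts of units from $\FF_q[\pi]$ to $\R_q[\pi]$, and then to identify the kernel of the middle map $\Phi$ with the Bogomolov multiplier using the work of Oliver in \cite{Oli80}. I would begin with the outer maps. The right-hand surjection $\M_q\to\pi_{\ab}$ sends $\lambda\,\overline{(1-r)}\mapsto \mathrm{Tr}_{\R_q/\ZZ_p}(\lambda)\cdot\bar r$. This is compatible with the Frobenius relation $p\lambda\,\overline{(1-r)}=\varphi(\lambda)\,\overline{(1-r^p)}$ in $\M_q$ because $\mathrm{Tr}\circ\varphi=\mathrm{Tr}$ and the $p$-th power map on the $p$-group $\pi_{\ab}$ is multiplication by $p$; surjectivity is immediate. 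The left-hand map $\B_0(\pi)\times\pi_{\ab}\to(1+\I_{\FF_q})_{\ab}$ has an obvious $\pi_{\ab}$-factor coming from the inclusion $\pi\hookrightarrow 1+\I_{\FF_q}$, $g\mapsto g$; the $\B_0(\pi)$-factor is subtle and will emerge together with the exactness argument.

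The middle map $\Phi\colon(1+\I_{\FF_q})_{\ab}\to\M_q$ is defined using the $p$-adic logarithm. Given $u\in 1+\I_{\FF_q}$, pick any lift $\tilde u\in 1+\I_{\R_q}$ and let $\Phi(u)$ be the image in $\M_q$ of $\Log(\tilde u)=\sum_{n\geq 1}(-1)^{n+1}(\tilde u-1)^n/n$. Three things must be checked. First, the map is a homomorphism on abelianizations, because by Baker--Campbell--Hausdorff the defect $\Log(\tilde u\tilde v)-\Log(\tilde u)-\Log(\tilde v)$ is a sum of Lie brackets and hence vanishes in $\bar\I_{\R_q}$. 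Second, although $\Log(\tilde u)$ lies a priori in $\bar\I_{\R_q}\otimes\QQ_p$, the Frobenius relations defining $\M_q$ are forced by the identity $\Log(\tilde u^p)=p\Log(\tilde u)$ together with the congruence $(1+\lambda(1-r))^p\equiv 1+\varphi(\lambda)(1-r^p)\pmod{p}$ in $\R_q[\pi]$, and they render the image integral. Third, independence of the lift follows from $\Log(1+p\xi)\equiv p\xi$ modulo the Lie ideal, which is killed in $\M_q$ by those same Frobenius relations.

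Exactness at $\pi_{\ab}$ and at $\M_q$ then follow by inspection of generators. The main obstacle, and the core of the proof, is exactness at $(1+\I_{\FF_q})_{\ab}$: the identification of $\ker\Phi$ with $\B_0(\pi)\times\pi_{\ab}$. The $\pi_{\ab}$ summand is immediate, since for $g\in\pi$ the element $\Log(g)$ vanishes in $\M_q$ via the Frobenius relation for $g$. The $\B_0(\pi)$ piece requires passing to the pro-$p$ situation through the short exact sequence $1\to 1+p\I_{\R_q}\to 1+\I_{\R_q}\to 1+\I_{\FF_q}\to 1$ and assembling a snake-lemma diagram comparing the logarithm on both end terms. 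The connecting obstruction is an $\SK_1$-like quotient of $\R_q[\pi]$, which Oliver's computation in \cite{Oli80} identifies with $\B_0(\pi)$. Injectivity of the inclusion $\B_0(\pi)\times\pi_{\ab}\hookrightarrow(1+\I_{\FF_q})_{\ab}$ is thus the deepest input, resting on Oliver's cohomological calculation.
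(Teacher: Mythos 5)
Your overall strategy --- lift units to $1+\I_{\R_q}$, apply the $p$-adic logarithm, and feed the kernel into Oliver's computation of $\SK_1(\R_q[\pi])\cong\B_0(\pi)$ --- is the same as the paper's, but two of your steps have genuine gaps. The first is the well-definedness of your middle map. You send $u$ to the class of $\Log(\tilde u)$ in $\M_q$; but $\Log(\tilde u)$ lives in $\bar\I_{\R_q}\otimes\QQ_p$, whereas $\M_q$ is a torsion quotient of the lattice $\bar\I_{\R_q}$, so the class ``$[\Log(\tilde u)]$'' only makes sense once you know $\Log(\tilde u)$ is integral --- and in general it is not. The relations $p\lambda\overline{(1-r)}=\varphi(\lambda)\overline{(1-r^p)}$ are relations imposed inside $\bar\I_{\R_q}$ and cannot retroactively make a non-integral element integral; your appeal to the congruence $(1+\lambda(1-r))^p\equiv 1+\varphi(\lambda)(1-r^p)\pmod p$ does not address this. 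What is actually integer-valued is the corrected map $\Gamma=(1-\tfrac1p\Phi)\circ\Log$, and that is Oliver's Proposition 10, a substantive input rather than a formal consequence of the Baker--Campbell--Hausdorff manipulations you describe. The paper's map $(1+\I_{\FF_q})_{\ab}\to\M_q$ is induced by $\Gamma$, and the presentation of $\M_q$ as $\coker\bigl(1-\tfrac1p\Phi\colon p\bar\I_{\R_q}\to\bar\I_{\R_q}\bigr)$ is exactly what matches $\Gamma$ with the connecting map $\partial$ of the relative $K$-theory sequence for the ideal $p\I_{\R_q}$.

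The second gap is that exactness at $\M_q$ and at $(1+\I_{\FF_q})_{\ab}$ is not ``by inspection of generators.'' That the image of $\Gamma$ is precisely the kernel of $\bar\I_{\R_q}\to\pi_{\ab}$ is Oliver's Theorem 2; and identifying the kernel of the middle map with $\B_0(\pi)\times\pi_{\ab}$ on the nose (rather than with some quotient of it) requires the splitting $\K_1(\R_q[\pi])\cong\R_q^*\times\SK_1(\R_q[\pi])\times\pi_{\ab}\times\Wh'(\R_q[\pi])$, which rests on Wall's theorem that $\Wh'(\R_q[\pi])$ is torsion-free, together with the fact (Oliver's Proposition 2 plus Bass's surjectivity statement) that $\K_1(\R_q[\pi],p\I_{\R_q})\cong p\bar\I_{\R_q}$ is torsion-free, so that $\SK_1(\R_q[\pi])\times\pi_{\ab}$ injects into $\K_1(\FF_q[\pi])$. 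Your ``snake-lemma diagram'' points in the right direction, but these are exactly the places where the named results of \cite{Oli80}, \cite{Wa} and \cite{Bas68} must be invoked; without them the kernel is only bounded, not identified.
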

\begin{proof}
Given a ring $\R$, recall the first $\K$-theoretical group 
$\K_{1}(\R)= \GL(\R)_{\ab}$.
When $\R$ is a local ring, there is an isomorphism
$\K_{1}(\R)\cong \R^*_{\ab}$ (see \cite[Corollary 2.2.6]{Ros94}). 
We therefore have $\K_1(\FF_q[\pi]) \cong \FF_q^* \times (1 + \I_{\FF_q})_{\ab}$, and
our proof relies on inspecting the connection between $\K_1(\FF_q[\pi])$ and
$\K_1(\R_q[\pi])$ by utilizing the results of \cite{Oli80}.

Put $\Q_q$ to be the ring of fractions of $\R_q$ and let
\[
\SK_1(\R_q[\pi]) = \ker\left( \K_1( \R_q[\pi] ) \to \K_1(\Q_q[\pi]) \right).
\]
By \cite[Theorem 3]{Oli80}, we have that $\SK_1(\R_q[\pi]) \cong \B_0(\pi)$.
Now set
\[
\Wh'(\R_q[\pi]) = \K_1(\R_q[\pi])/(\R_q^* \times \pi_{\ab} \times \SK_1(\R_q[\pi])).
\]
The crux of understanding the structure of the group $\K_1(\R_q[\pi])$ is in the
short exact sequence (see \cite[Theorem 2]{Oli80})
\begin{equation*}
\xymatrix{ 
1 \ar[r] & \Wh'(\R_q[\pi]) \ar[r]^-{\Gamma} & \bar\I_{\R_q} \ar[r] & \pi_{\ab} \ar[r] & 1,
}
\end{equation*}
where the map $\Gamma$ is defined by composing the $p$-adic logarithm with a
linear automorphism of $\bar\I_{\R_q} \otimes \QQ_p$. More precisely, there is a
map $\Log \colon 1 + \I_{\R_q} \to \I_{\R_q} \otimes \QQ_p$, which induces an
injection $\log \colon \Wh'(\R_q[\pi]) \to \bar\I_{\R_q} \otimes \QQ_p$. Setting
$\Phi \colon \I_{\R_q} \to \I_{\R_q}$ to be the map  $\sum_{g \in \pi} \alpha_g
g \mapsto \sum_{g \in \pi} \varphi(\alpha_g) g^p$, we define $\Gamma \colon
\Wh'(\R_q[\pi]) \to \bar\I_{\R_q} \otimes \QQ_p$ as the  composite of $\log$
followed by the linear map $1 - \frac{1}{p} \Phi$. It is shown in
\cite[Proposition 10]{Oli80} that  $\im \Gamma \subseteq \bar\I_{\R_q}$, i.e.,
$\Gamma$ is integer-valued. We thus have a diagram
\begin{equation}
\label{eq:diagWh}
\xymatrix{ 
 & 1 + \I_{R_q} \ar[d] \ar@/^/[dr]^-{(1 - \frac{1}{p}\Phi)\circ \log} \\
1 \ar[r] & \Wh'(\R_q[\pi]) \ar[r]^-{\Gamma} & \bar\I_{R_q}.
}
\end{equation}
The group $\Wh'(\R_q[\pi])$ is torsion-free (cf. \cite {Wa}), so we have an
explicit description
\begin{equation}
\label{eq:k1explicit}
\K_1(\R_q[\pi]) \cong  \R_q^*\times\SK_1(\R_q[\pi]) \times \pi_{\ab} \times \Wh'(\R_q[\pi]).
\end{equation}

To relate the above results to $\K_1(\FF_q[\pi])$, we invoke a part of the
$\K$-theoretical long exact sequence for the ring $\R_q[\pi]$ with respect to
the ideal generated by $p$,
\begin{equation}
\label{eq:exseqK}
\xymatrix{ 
\K_1(\R_q[\pi], p) \ar[r]^-{\partial} & \K_1(\R_q[\pi]) \ar[r]^-{\mu} & \K_1(\FF_q[\pi]) \ar[r] & 1.
}
\end{equation}
Note that $\K_1(\R_q[\pi], p) = (1 + p\R_q) \times \K_1(\R_q[\pi],p\I_{\R_q})$
and $\R_q^*/(1 + p\R_q) \cong \FF_q^* $. 
Hence \eqref{eq:k1explicit} and \eqref{eq:exseqK} give a reduced exact sequence
\begin{equation}
\label{eq:exseqKreduced}
\xymatrix{ 
\K_1(\R_q[\pi], p\I_{\R_q}) \ar[r]^-{\partial} & \Wh'(\R_q[\pi]) \ar[r]^-{\mu} & \displaystyle \frac{(1 + \I_{\FF_q})_{\ab}}{\mu(\SK_1(\R_q[\pi]) \times \pi_{\ab})} \ar[r] & 1.
}
\end{equation}
To determine the structure of the relative group $\K_1(\R_q[\pi], p\I_{\R_q})$
and its connection to the map $\partial$, we make use of \cite[Proposition
2]{Oli80}. The restriction of the logarithm map $\Log$ to $1 + p\I_{\R_q}$
induces an isomorphism $\log \colon \K_1(\R_q[\pi], p\I_{\R_q}) \to
p\bar\I_{\R_q}$ such that the following diagram commutes:
\begin{equation}
\label{eq:diagK1rel}
\xymatrix{ 
 & 1 + p\I_{\R_q} \ar[d] \ar@/^/[dr]^-{\log} \\
1 \ar[r] & \K_1(\R_q[\pi],p\I_{\R_q}) \ar[r]^-{\log} & p\bar\I_{\R_q}.
}
\end{equation}
In particular, the group $\K_1(\R_q[\pi],p\I_{\R_q})$ is torsion-free, and so
$\mu(\SK_1(\R_q[\pi]) \times \pi_{\ab}) \cong \SK_1(\R_q[\pi]) \times
\pi_{\ab}$. Note that by \cite[Theorem V.9.1]{Bas68}, the vertical map $1 +
p\I_{\R_q} \to \K_1(\R_q[\pi],p\I_{\R_q})$ of the above diagram is surjective.

We now collect the stated results to prove the theorem. First combine the
diagrams \eqref{eq:diagWh} and \eqref{eq:diagK1rel} into the following diagram:
\begin{equation}
\label{eq:diagCompare}
\xymatrix{ 
1 + p\I_{\R_q} \ar[d] \ar[rr] \ar@/^/[ddr]^{\log} && 1 + \I_{\R_q} \ar[d] \ar@/^/[ddr]^{(1 - \frac{1}{p}\Phi)\circ \log} \\
\K_1(\R_q[\pi],p\I_{\R_q}) \ar[rr]^-{\partial} \ar[dr]^-{\log} && \Wh'(\R_q[\pi]) \ar[dr]^-{\Gamma} \\
& p\bar\I_{\R_q} \ar[rr]^-{1 - \frac{1}{p}\Phi} && \bar\I_{\R_q}.
}
\end{equation}
Since the back and top rectangles commute and the left-most vertical map is
surjective, it follows that the bottom rectangle also commutes.  Whence $\coker
\partial \cong \coker (1 - \frac{1}{p}\Phi)$. Observing that the latter group is
isomorphic to $\M_q$, the exact sequence \eqref{eq:exseqKreduced} gives an exact
sequence
\begin{equation}
\label{eq:exfinal}
\xymatrix{ 
1 \ar[r] & \B_0(\pi) \times \pi_{\ab} \ar[r] & (1 + \I_{\FF_q})_{\ab} \ar[r] & \M_q \ar[r] & \pi_{\ab} \ar[r] & 1.
}
\end{equation}
The proof is complete.
\end{proof}

We now derive Theorem \ref{th:sizeequality} from Theorem \ref{th:exactseq}.

\begin{proof}[Proof of Theorem \ref{th:sizeequality}]
The exact sequence of Theorem \ref{th:exactseq} implies that $|(1 +
\I_{\FF_q})_{\ab}| = |\!\B_0(\pi)| \cdot |\!\M_q\!|$. Hence it suffices to
compute $|\!\M_q\!|$. To this end, we filter $\M_q$ by the series of its
subgroups
\[
\M_q\supseteq p\M_q\supseteq p^2\M_q\supseteq\cdots\supseteq p^{\log_{p}(\exp\pi)}\M_q.
\]
Note that the relations $p \lambda \overline{(1 - r)} -
\varphi(\lambda)\overline{(1 - r^p)}=0$ imply $p^{\log_{p}(\exp\pi)}\M_q=0$.

For each $0\leq i\leq \log_p(\exp\pi)$, put
\[
\pi_i=\{x^{p^{i}}\mid x\in\pi\} \textrm{\ and \ }  \mathcal{C}_{i}=\mathcal{C}\cap(\pi_i\setminus \pi_{i+1}).
\]
Then
\[
 p^{i}\M_q/p^{i+1}\M_q=\langle\lambda \overline{(1-r)}:\lambda\in B_{q},\ r\in\mathcal{C}_i\rangle \cong \bigoplus_{\mathcal{C}_i} C_p^{n}. 
\]
It follows that $|\!\M_q\!|=q^{|\mathcal{C}|}=q^{\kk(\pi)-1}$ and the proof is complete.
\end{proof}

\begin{example}
Let $\pi$ be the group given by the polycyclic generators $\{ g_i \mid 1 \leq i
\leq 7 \}$ subject to the power-commutator relations
\[
\begin{aligned}
g_{1}^{2} = g_{4}, \,
 g_{2}^{2} = g_{5}, \,
 g_{3}^{2} = g_{4}^{2} = g_{5}^{2} = g_{6}^{2} = g_{7}^{2} = 1, \\
[g_{2}, g_{1}]  = g_{3}, \,
 [g_{3}, g_{1}]  = g_{6} ,\,
 [g_{3}, g_{2}]  = g_{7} , \,
 [g_{4}, g_{2}]  = g_{6}, \,
 [g_{5}, g_{1}]  = g_{7},
\end{aligned}
\]
where the trivial commutator relations have been omitted. The group $\pi$ is of
order $128$ with $\pi_{\ab} \cong C_4 \times C_4$. Its Bogomolov multiplier is
generated by the commutator relation $[g_{3}, g_{2}] = [g_{5}, g_{1}]$ of order
$2$, see \cite[Family 39]{JM14}. We have $\kk(\pi) = 26$ and by inspecting the
power structure of conjugacy classes, we see that $\M_q \cong C_2^{13} \times
C_4^{6}$. On the other hand, using the available computational tool
\cite{LAGUNA}, it is readily verified that we have $(1 + \I_{\FF_q})_{\ab} \cong
C_2^{13} \times C_4^{5} \times C_8$. Following the proof of Theorem
\ref{th:exactseq}, the embedding of $\B_0(\pi) \times \pi_{\ab}$ into $(1 +
\I_{\FF_q})_{\ab}$ maps the generating relation $[g_{3}, g_{2}] = [g_{5},
g_{1}]$ of $\B_0(\pi)$ into the element $\exp((1 - g_7)(g_3 - g_5))$, which
belongs to $(1 + \I_{\FF_q})_{\ab}^4$. In particular, the embedding of
$\B_0(\pi) \times \pi_{\ab}$ into $(1 + \I_{\FF_q})_{\ab}$ may not be split.
\end{example}
\subsection{The fake degree conjecture}
In this subsection we explain in more detail how Corollary \ref{c:examples}
follows from Theorem \ref{th:sizeequality}.

Given an algebra group $G=1+J$ where $J$ is a finite dimensional nilpotent
$\f$-algebra, the fake degree conjecture establishes a bijection between degrees
of irreducible characters of $G$ and the square roots of the lengths of the
coadjoint orbits in $J^*=\Hom_{\f}(J,\f)$. The following result is well known and
enables us to compute lengths of coadjoint orbits. We include its proof for the
reader's convenience.

\begin{lemma}\label{l:stab}
Let $\lambda\in J^{*}$. Define $B_{\lambda}:J\times J\mapsto \f$ to be the
bilinear form which assigns to every pair $(u,v)\in J\times J$ the element
$\lambda([u,v])\in \f$. Then $\stab(\lambda)=1+\rad B_{\lambda}.$
\end{lemma}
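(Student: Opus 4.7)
The plan is to unwind the definition of the coadjoint action of $G = 1 + J$ on $J^*$ and show that the condition for $w = 1+u$ to stabilize $\lambda$ translates directly into the condition defining $u \in \rad B_\lambda$. Concretely, $G$ acts on $J$ by conjugation, $w \cdot v = wvw^{-1}$, and the induced action on $J^*$ is $(w \cdot \lambda)(v) = \lambda(w^{-1} v w)$, so $1 + u \in \stab(\lambda)$ if and only if $\lambda(w^{-1}vw) = \lambda(v)$ for every $v \in J$.

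The key observation is that left multiplication by $w$ is a bijection $J \to J$: inside the unitization $\f \oplus J$, the element $w$ is a unit and $J$ is a two-sided ideal, so multiplication by $w$ sends $J$ onto itself with inverse given by multiplication by $w^{-1}$. Substituting $v = wy$ in the stabilizer condition rewrites it as $\lambda(yw) = \lambda(wy)$ for every $y \in J$, i.e., $\lambda([w,y]_L) = 0$ for every $y \in J$. Since $[1,y]_L = 0$ in any associative algebra, this simplifies to $\lambda([u,y]_L) = 0$ for every $y \in J$, which is exactly the statement that $u$ lies in the radical of $B_\lambda$ (noting that $B_\lambda(u,v) = -B_\lambda(v,u)$ is skew-symmetric, so the left and right radicals of $B_\lambda$ coincide).

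There is really no substantial obstacle here; the only minor subtlety lies in justifying the substitution $v = wy$ and in verifying the skew-symmetry of $B_\lambda$ so that $\rad B_\lambda$ is unambiguous. Once these are in place, the lemma follows as a purely formal equivalence between two conditions, with no computation beyond the single substitution above.
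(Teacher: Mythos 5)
Your proof is correct and follows essentially the same route as the paper: both arguments use the bijectivity of multiplication by $w=1+u$ on $J$ to turn the stabilizer condition $\lambda(w^{-1}vw)=\lambda(v)$ into $\lambda([u,v]_L)=0$ for all $v$, i.e.\ $u\in\rad B_{\lambda}$. The extra remarks on skew-symmetry and on justifying the substitution are fine but not a substantive departure.
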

\begin{proof}
Let $g=1+u$ be an element of $G$. Then $g$ fixes $\lambda$ if and only if for
every $v\in J$, $\lambda(gvg^{-1})=\lambda(v)$ or equivalently
$\lambda(gvg^{-1}-v)=0$. Since multiplication by $g$ acts bijectively on $J$
this amounts to $\lambda(gv-vg)=\lambda(uv-vu)=\lambda([u,v])=0$ for every $v\in
J$, i.e., $u\in\rad B_{\lambda}$ and the result follows.
\end{proof}

We now focus on $1$-dimensional characters. In this case, the fake degree
conjecture would establish a bijection between linear characters of $G$ and
fixed points of $J^{*}$ under the coadjoint action of $G$.

\begin{lemma}
\label{l:abelianization}
Let $J$ be a finite dimensional nilpotent algebra over a finite field $\f$. Put
$G=1+J$. Then the number of fixed points in $J^*$ under the coadjoint action of
$G$ equals the index of $[J,J]_L$ in $J$. In particular, if the fake degree
conjecture holds, then
\[
|J /[J,J]_{L}|=|(1+J)_{\ab}|.
\]
\end{lemma}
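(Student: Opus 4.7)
The plan is to leverage Lemma~\ref{l:stab} directly. A point $\lambda\in J^{*}$ is fixed by the coadjoint action of $G$ precisely when $\stab(\lambda)=G$, which by Lemma~\ref{l:stab} happens if and only if $\rad B_{\lambda}=J$. Unwinding the definition of $B_\lambda$, this is the condition $\lambda([u,v])=0$ for every $u,v\in J$, i.e., $\lambda$ vanishes on $[J,J]_L$. Therefore the set of fixed points is canonically identified with $\Hom_{\f}(J/[J,J]_L, \f)$.

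Next, since $J/[J,J]_L$ is a finite-dimensional $\f$-vector space, its $\f$-linear dual has the same cardinality. This gives the first assertion:
\[
\#\{\lambda \in J^{*} : G\cdot\lambda=\lambda\} \;=\; |\Hom_{\f}(J/[J,J]_L, \f)| \;=\; |J/[J,J]_L|.
\]

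For the second assertion, I would observe that the $1$-dimensional characters of the finite group $G$ are in bijection with characters of $G_{\ab}$, so there are exactly $|G_{\ab}|$ of them. Under the fake degree conjecture, linear characters correspond bijectively (with multiplicities) to coadjoint orbits of size $1$, that is, to fixed points in $J^{*}$. Combining this with the count above yields $|G_{\ab}| = |J/[J,J]_L|$, as desired.

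There is no real obstacle here: once Lemma~\ref{l:stab} is in hand, the argument is a direct translation between a coadjoint-orbit statement and a linear-algebra statement about the pairing induced by the Lie bracket. The only small point to keep straight is that ``coincide with multiplicities'' in the fake degree conjecture is what ensures the count of linear characters matches the count of fixed points, rather than merely the set of character degrees matching the set of orbit sizes.
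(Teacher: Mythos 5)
Your proposal is correct and follows essentially the same route as the paper: apply Lemma~\ref{l:stab} to identify fixed points with linear forms vanishing on $[J,J]_L$, count these as $|J/[J,J]_L|$, and then invoke the fake degree conjecture to match this with the number of linear characters, i.e.\ $|(1+J)_{\ab}|$. Your remark about why ``counting multiplicities'' is the relevant clause is a sensible clarification but does not change the argument.
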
  
\begin{proof}
By Lemma \ref{l:stab}, $\lambda\in J^*$ is fixed under the coadjoint action of
$G$ if and only if $\rad B_{\lambda}=J$, which amounts to
$\lambda([J,J]_{L})=0$. The number of fixed points in $J^*$ therefore
equals the number of linear forms vanishing on $[J,J]_{L}$. Hence if the fake
degree conjecture holds, then
\[
|J /[J,J]_{L}|=|\{\text{fixed points of\ } J^{*}\}|=|\{\text{linear characters of\ }G\}|=|(1+J)_{\ab}|. \qedhere
\]
\end{proof}

We now consider the case when $J$ is an augmentation ideal of the
group algebra $\f[\pi]$ of a finite $p$-group $\pi$ over a finite field $\f$ of
characteristic $p$. The ideal $\I_{\f}=\rad \f[\pi]$ is nilpotent
and hence $1+\I_{\f}$ is an algebra group. The following result is well known.

\begin{lemma}
\label{l:liealgebra}
Let $\pi$ be a finite group and $\f$ a field. Then
\[
\dim_{\f} \I_{\f}/[\I_{\f},\I_{\f}]_L= \kk(\pi)-1.
\]
\end{lemma}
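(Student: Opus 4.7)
The plan is to reduce the computation to the well-known fact that the quotient of the group algebra $\f[\pi]$ by its Lie commutator subspace has dimension equal to the number of conjugacy classes of $\pi$, and then use the augmentation sequence to lose one dimension.

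First I would observe that $[\I_{\f},\I_{\f}]_L = [\f[\pi],\f[\pi]]_L$. This is because every element $a \in \f[\pi]$ can be written uniquely as $a = \alpha \cdot 1 + a_0$ with $\alpha \in \f$ and $a_0 \in \I_{\f}$, and since scalars are central in $\f[\pi]$, the Lie bracket satisfies $[a,b]_L = [a_0,b_0]_L$ for any $a,b \in \f[\pi]$. Hence the two commutator subspaces coincide.

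Next I would give an explicit description of this common commutator subspace. The inclusion $\langle g - h^{-1}gh \mid g,h \in \pi \rangle \subseteq [\f[\pi],\f[\pi]]_L$ is immediate from the identity $h \cdot (h^{-1}g) - (h^{-1}g) \cdot h = g - h^{-1}gh$. For the reverse inclusion, it suffices to compute on basis vectors: $gh - hg = gh - g^{-1}(gh)g$, which is of the desired form since $hg$ and $gh$ are conjugate in $\pi$. Consequently $[\I_{\f},\I_{\f}]_L$ is the $\f$-span of all differences $g - g^\sigma$ as $g,\sigma$ range over $\pi$.

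From here the quotient is easy to identify: in $\f[\pi]/[\I_{\f},\I_{\f}]_L$ the images of conjugate group elements coincide, and conversely a linear dependence among the resulting classes would imply (by summing coefficients over each conjugacy class, using that the class sums are linearly independent in $\f[\pi]$) that all coefficients vanish. Therefore the images of a set of conjugacy class representatives form an $\f$-basis of the quotient, giving $\dim_{\f} \f[\pi]/[\f[\pi],\f[\pi]]_L = \kk(\pi)$.

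Finally I would finish via the augmentation short exact sequence $0 \to \I_{\f} \to \f[\pi] \to \f \to 0$. Since $[\I_{\f},\I_{\f}]_L \subseteq \I_{\f}$ and $\f[\pi]/\I_{\f}$ is one-dimensional, the quotient $\I_{\f}/[\I_{\f},\I_{\f}]_L$ is a hyperplane in $\f[\pi]/[\I_{\f},\I_{\f}]_L$, hence has dimension $\kk(\pi)-1$. The only step requiring genuine content is the classical computation in the middle paragraph; everything else is formal manipulation with the augmentation ideal.
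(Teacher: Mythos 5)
Your proof is correct and follows essentially the same route as the paper: identify $[\I_{\f},\I_{\f}]_L$ with $[\f[\pi],\f[\pi]]_L$, show that conjugacy class representatives give a basis of $\f[\pi]/[\f[\pi],\f[\pi]]_L$ (spanning via the commutator identity realizing differences of conjugates, independence via the coefficient-sum functionals on each class), and drop one dimension using the augmentation. The only cosmetic difference is that you prove the equality of commutator subspaces via the decomposition $a=\alpha\cdot 1+a_0$ with central scalars, while the paper checks $[g,h]_L=[g-1,h-1]_L$ on basis elements.
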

\begin{proof}
It is clear that the set $\pi$ is an $\f$-basis for $\f[\pi]$. We first claim
that
\[
\dim_{\f} \f[\pi]/ [\f[\pi],\f[\pi]]_L= \kk(\pi).
\]

Let $x_{1},\ldots,x_{\kk(\pi)}$ be representatives of conjugacy classes of
$\pi$. Observe that for any $x,y,g\in\pi$ with $y=g^{-1}xg$, we have
$x-y=[g,g^{-1}x]_{L}$. The elements
$\bar x_{1},\ldots,\bar x_{\kk(\pi)}$ therefore span $\f[\pi]/
[\f[\pi],\f[\pi]]_L$.

Set $\lambda_{i}$ to be the linear functional on $\f[\pi]$ that takes the value
$1$ on the elements corresponding to the conjugacy class of $x_{i}$ and vanishes
elsewhere. Observe that for any $g,h\in\pi$, we have $[g,h]_{L}=g(hg)g^{-1}-hg$
and hence each $\lambda_{i}$ induces a linear functional on $\f[\pi]/
[\f[\pi],\f[\pi]]_L$. Now if $\sum_j\alpha_{j}\bar x_{j}=0$ for some $\alpha_j
\in \f$, then $\alpha_i = \lambda_{i}(\sum_j \alpha_j \bar x_j)=0$ for each $i$.
It follows that $\bar x_{1},\ldots,\bar x_{\kk(\pi)}$ are also linearly
independent and hence a basis. This proves the claim.


Now, it is clear that $\{g-1:g\in\pi\setminus\{1\}\}$ is an $\f$-basis for
$\I_{\f}$. Since for any $g,h\in\pi$, we have $[g,h]_{L}=[g-1,h-1]_{L}$, it
follows that $[\f[\pi],\f[\pi]]_{L}=[\I_{\f},\I_{\f}]_{L}$, whence the lemma.
\end{proof}

It follows readily from Theorem \ref{q:abel} and Lemma \ref{l:liealgebra} that
whenever $\pi$ is a $p$-group with $\B_0(\pi) \neq 0$, the algebra $J=\I_{\f}$
gives an example for the statement of Corollary \ref{c:examples}. Since for each
prime $p$ there exist groups of order $p^5$ (resp. $2^6$ for $p=2$) with 
non-trivial Bogomolov multipliers (see \cite{HKK13,CHKK10}), Corollary
\ref{c:examples} follows.
\subsection{Proof of Theorem \ref{t:algebraicgroup} and Theorem \ref{exponent}}
\begin{proof}[Proof of Theorem \ref{t:algebraicgroup} and \ref{exponent}]
We will consider an extension $\FF_{l}$ of $\FF_q$ of degree $m$. The inclusion
$\mathbf G(\Fq)\subseteq \mathbf G(\Fl)$ induces a map $f \colon \mathbf G(\Fq)_{\ab}\to \mathbf G(\Fl)_{\ab}$
with
\[
\ker f= (\mathbf G(\Fq)\cap \mathbf G(\Fl)')/\mathbf G(\Fq)'.
\]
Note that there exists a large enough $m$  such that $\mathbf G'(\Fq)= \mathbf G(\Fq)\cap \mathbf G(\Fl)'$, and hence
$\ker f = \mathbf G'(\FF_q)/\mathbf G(\FF_q)'$. For this reason we want to understand $\ker f$
for a given $m$. 


The inclusion $\Fq\subseteq \Fl$ induces a map $$\incl\colon\K_{1}(\Fq[\pi])\to
\K_{1}(\Fl[\pi]).$$ Note that $f$ is just the restriction of $\incl$ to
$(1+\I_{\Fq})_{\ab}$. Recalling sequence \eqref{eq:exseqKreduced} from the proof
of Theorem \ref{th:exactseq}, we set
\[
\SK_{1}(\Fl[\pi])=\mu(\SK_{1}(\R_{l}[\pi])\subseteq(1 + \I_{\Fl[\pi]})_{\ab}=\mathbf G(\Fl)/\mathbf G(\Fl)'.
\]
Commutativity of the diagram
\begin{equation}
\label{eq:diagcomrstr}
\xymatrix{ 
 \K_{1}(\R_{q}[\pi])\ar[r]^-{\mu} \ar[d]^-{\incl} & \K_{1}(\Fq[\pi])\ar[d]^-{\incl}\\
 \K_{1}(\R_{l}[\pi])\ar[r]^-{\mu} & \K_{1}(\Fl[\pi])
 }
\end{equation}
shows that $\incl$ restricts to a map
$\incl\colon\SK_{1}(\Fq[\pi])\to\SK_{1}(\Fl[\pi])$. Recalling that
$\SK_{1}(\R_l[\pi])\cong\B_{0}(\pi)$, we obtain from sequence \eqref{eq:exfinal}
the commutative diagram
\begin{equation*} 
\xymatrix{
1 \ar[r] & \SK_{1}(\Fq[\pi])\times \pi_{\ab} \ar[r]\ar[d]^-{\incl\times \id} & \mathbf G(\Fq)_{\ab} \ar[r]\ar[d]^-{f} & \M_q \ar[r]\ar[d]^-{\iota} & \pi_{\ab} \ar[r] & 1\\
1 \ar[r] & \SK_{1}(\Fl[\pi]) \times \pi_{\ab} \ar[r] & \mathbf G(\Fl)_{\ab} \ar[r] & \M_l \ar[r] & \pi_{\ab} \ar[r] & 1,
}
\end{equation*}
where $\iota$ is the map induced by the inclusion $\I_{\R_{q}}\subseteq \I_{\R_{l}}$.

We will now show that $\ker\iota=0$. This will imply $\ker
f\subseteq\SK_{1}(\Fq[\pi])$. Without loss of generality, we may assume that
there is an inclusion of bases $B_{q}\subseteq B_{l}$. As in the proof of Theorem \ref{th:sizeequality}, let us consider the series
\[
\M_l\supseteq p\M_l\supseteq p^2\M_l\supseteq\ldots\supseteq p^{\log_{p}(\exp\pi)}\M_l.
\]
Observe again that for each $0\leq i\leq\exp\pi-1$ we have
\[
\begin{aligned}
p^{i}\M_q/p^{i+1}\M_q&=\langle\lambda\overline{(1-r)}:\lambda\in B_{q},\ r\in\mathcal{C}_i\rangle,\\
p^{i}\M_l/p^{i+1}\M_l&=\langle\lambda \overline{(1-r)}:\lambda\in B_{l},\ r\in\mathcal{C}_i\rangle.
\end{aligned}
\]
If we consider the graded groups associated to the series above, we get an induced map 
\[
\gr(\iota)\colon \bigoplus_{i=0}^{\exp\pi-1} p^{i}\M_q/p^{i+1}\M_q\to \bigoplus_{i=0}^{\exp\pi-1}p^{i}\M_l/p^{i+1}\M_l.
\]
By construction $\iota$ is induced by the assignments
$\lambda \overline{(1-r_{m})}\mapsto\lambda\overline{(1-r_{m})}$, for every $\lambda\in B_{q},\
r\in\mathcal{C}$. Hence $\gr(\iota)$ is injective in every component and
therefore injective. This implies $\ker\iota=0$, as desired. In particular, we obtain that 
\begin{equation}\label{manyelements}
|\mathbf G(\Fq)/\mathbf G'(\Fq)|\ge|\M_q|= q^{\kk(\pi)-1}.\end{equation}

We are now ready to show the first statement of Theorem \ref{t:algebraicgroup}.
Observe that $\mathbf G$ is a unipotent connected algebraic group defined over $\Fp$ and so is $\mathbf G'$ (\cite[Corollary I.2.3]{Bor91}).
 Hence $\mathbf G'\cong_{\Fp}\mathbb{A}^{\dim \mathbf G'}$ (c.f. \cite[Remark A.3]{KMT74}) and so
$|\mathbf G'(\Fp)|=p^{\dim \mathbf G'}$. By (\ref{manyelements}), 
we have $|\mathbf G(\Fp)/\mathbf G'(\Fp)|\ge p^{\kk(\pi)-1}$, whence $\dim \mathbf G'\le  |\pi|-\kk(\pi)$. 
On the other
hand we have  $ [\mathfrak{L}(G),\mathfrak{L}(G)]_{L}= [\I_{\F},\I_{\F}]_{L},$  which, by Lemma \ref{l:liealgebra}, has dimension $|\pi|-\kk(\pi) $. It is well known that for an algebraic group, $\dim \mathbf G'\geq\dim
[\mathfrak{L}(\mathbf G),\mathfrak{L}(\mathbf G)]_{L}$ (see \cite[Corollary 10.5]{Hum75}). Thus
$ \dim \mathbf G'=|\pi|-\kk(\pi)$.

Let us set $e=\exp\B_{0}(\pi)$. We now claim that $\ker f=\SK_{1}(\Fq[\pi])$ if
and only if $e$ divides $m=|\Fl:\Fq| $. This will imply the second statement of Theorem \ref{t:algebraicgroup} and also  Theorem \ref{exponent}.

Let us consider
$\Fl[\pi]\cong\bigoplus_{i=1}^{m}\Fq[\pi]$ as a free $\Fq[\pi]$-module.  This
gives a natural inclusion $\GL_{1}(\Fl[\pi])\to \GL_{m}(\Fq[\pi])$, which
induces the transfer map
\[
\trf:\K_{1}(\Fl[\pi])\to \K_{1}(\Fq[\pi]).
\]
Note that if $x\in\K_{1}(\Fq[\pi])$, then $(\trf\circ\incl)(x)=x^{m}$.
By commutativity of \eqref{eq:diagcomrstr} the transfer map restricts to a map
\begin{equation*}
\trf:\SK_{1}(\Fl[\pi])\to \SK_{1}(\Fq[\pi]).
\end{equation*}
Moreover, by \cite[Proposition 21]{Oli80} the transfer map is an isomorphism.
It thus follows that $\incl(\SK_{1}(\Fq[\pi]))=1$ if and only if $e$ divides $m$.
Hence $\ker f=\SK_{1}(\Fq[\pi])$ if and only if $e$ divides $m $ and we are done.
 \end{proof}


\end{document}